\newtheorem{theorem}{Theorem}
\newtheorem{corollary}{Corollary}
\newtheorem{remark}{Remark}
\newtheorem{lemma}{Lemma}
\def\re{\mathbb{R}}
\def\eps{\varepsilon}
\def\pd{\partial}
\def\ol{\overline}
\def\la{\lambda}
\def\({\left(}
\def\){\right)}
\def\pd{\partial}
\def\|{\Vert}
\begin{document}
\title[Nonlocal one-dimensional boundary blow up problem]{Bifurcation analysis for nonlocal one-dimensional boundary blow up problems}

\author{Kazuki Sato}

\address{
Department of Mathematics, Osaka Metropolitan University \\
3-3-138, Sumiyoshi-ku, Sugimoto-cho, Osaka, Japan \\
}

\email{sf22817a@st.omu.ac.jp \\}

\author{Futoshi Takahashi}

\address{
Department of Mathematics, Osaka Metropolitan University \\
3-3-138, Sumiyoshi-ku, Sugimoto-cho, Osaka, Japan \\
}

\email{futoshi@omu.ac.jp \\}

\begin{abstract}
In this paper, we study one-dimensional boundary blow up problems with Kirchhoff type nonlocal terms on an interval.
We perform a bifurcation analysis on the problems and obtain the precise number of solutions according to the value of the bifurcation parameter.
We also obtain the precise asymptotic formula for solutions for special cases.
\end{abstract}

\subjclass[2020]{Primary 34C23; Secondary 37G99.}

\keywords{Nonlocal elliptic equations, Boundary blow up solutions.}
\date{\today}

\dedicatory{}

\maketitle

\section{Introduction}

Let $I = (-1, 1) \subset \re$ be an interval.
In this paper, we consider the following one-dimensional nonlocal elliptic problem with boundary blow up condition: 
\begin{equation}
\label{BBP}
	\begin{cases}
	&A\(\| u \|_{q_1}, \| u' \|_{r_1} \) u''(x) = \la B\( \| u \|_{q_2}, \| u' \|_{r_2}\) u^p(x), \quad x \in I, \\
	&u(x) > 0, \quad x \in I, \\
	&\lim_{x \to \pm 1} u(x) = +\infty,
	\end{cases}
\end{equation}
where 
$p > 1$, $\la > 0$ is a given constant, 
and $A(s, t)$ and $B(s, t)$ are positive continuous functions in $(s, t) \in \re_+ \times \re_+$. 
Here and in what follows, $\| \cdot \|_q$ stands for $\| \cdot \|_{L^q(I)}$ for any $q > 0$. 
We call $u$ a {\it solution} of \eqref{BBP} if $u \in C^2(I)$, $u \in L^{\max \{q_1, q_2 \}}(I)$, $u' \in L^{\max \{ r_1, r_2 \}}(I)$, 
and $u$ solves the equation in the classical sense on $I$.
The constant $\la > 0$ in \eqref{BBP} is considered as a bifurcation parameter, 
i.e., the number of solutions of \eqref{BBP} changes according to the value of $\la$.
In the following, we study the exact number of solutions of \eqref{BBP} when $\la$ varies, 
and also investigate the precise asymptotic behavior of solution curves for specified $A(\cdot, \cdot)$, $B(\cdot, \cdot) \in C(\re_+ \times \re_+ ; \re_+)$.

Throughout of the paper, we assume
\begin{equation}
\label{Assumption}
	0 < q_1, q_2 < \frac{p-1}{2} \quad \text{and} \quad 0 < r_1, r_2 < \frac{p-1}{p+1}.
\end{equation}
For $p > 1$, let $U_p \in C^2(I)$ be a unique even solution to the problem
\begin{equation}
\label{Eq:U_p}
	\begin{cases}
	U_p''(x) = U_p^p(x), \quad x \in I = (-1,1), \\
	U_p(x) > 0, \quad x \in I, \\
	\lim_{x \to \pm 1} U_p(x) = +\infty.
	\end{cases}
\end{equation}
The existence of solutions are standard since the nonlinearity $f(s) = s^p$ $(p > 1)$ satisfies the famous Keller-Osserman condition \cite{Keller}, \cite{Osserman}
\[
	\int_a^{\infty} \frac{1}{\sqrt{F(t)}} dt < +\infty \quad \text{for some} \quad a > 0,
\]
where $F(t) = \int_0^t f(s) ds$. 
For the uniqueness, see Proposition 1.8 and Remark 1.10 in \cite{DDGR}, or Theorem 4 in \cite{Wang-Fan}.
In the former paper \cite{Inaba-TF}, we showed that $U_p \in L^q(I)$ for $0 < q < \frac{p-1}{2}$ by the time map method as in \cite{Shibata(JMAA)}.
The same approach yields that $U' \in L^r(I)$ for $0 < r < \frac{p-1}{p+1}$, see Lemma \ref{Lemma:U_p} below.

In this paper, we prove the following:

\begin{theorem}
\label{Theorem:P}
Let $A, B \in C(\re_+ \times \re_+ ; \re_+)$ and assume \eqref{Assumption}.
Consider the system of equations (w.r.t. $(s_1, s_2, t_1, t_2) \in \re_+ \times \re_+ \times \re_+ \times \re_+$)
\begin{equation}
\label{System}
	\begin{cases}
	&s_1^{1-p} = \la \dfrac{B(s_2, t_2)}{A(s_1, t_1)} \| U_p \|_{q_1}^{1-p} \\
	&s_2^{1-p} = \la \dfrac{B(s_2, t_2)}{A(s_1, t_1)} \| U_p \|_{q_2}^{1-p} \\
	&t_1^{1-p} = \la \dfrac{B(s_2, t_2)}{A(s_1, t_1)} \| U_p' \|_{r_1}^{1-p} \\
	&t_2^{1-p} = \la \dfrac{B(s_2, t_2)}{A(s_1, t_1)} \| U_p' \|_{r_2}^{1-p}
	\end{cases}
\end{equation}
where $U_p$ is the unique solution of \eqref{Eq:U_p}.
Then for $\la > 0$, the problem \eqref{BBP} has the same number of solutions 
$(s_1, s_2, t_1, t_2) \in (\re_+)^4$ of the system of equations \eqref{System}.
Also the number of solutions of \eqref{System} is the same as the number of solutions to the equation
(w.r.t $s \in \re_+$)
\begin{equation}
\label{Single}
	g(s) = \la \| U_p \|_{q_1}^{1-p},
\end{equation}
where
\begin{equation}
\label{g(s)}
	g(s) = s^{1-p} 
\frac{A\(s, \dfrac{\| U_p' \|_{r_1}}{\| U_p \|_{q_1}} s \)}{B\(\dfrac{\| U_p \|_{q_2}}{\| U_p \|_{q_1}}s, \dfrac{\| U_p' \|_{r_2}}{\| U_p \|_{q_1}}s \)}
\end{equation}
 
Moreover, any solution $u_{\la}$ of \eqref{BBP} is of the form 
\begin{equation}
\label{u_form}
	u_{\la}(x) = s_1 \frac{U_p(x)}{\| U_p \|_{q_1}} 
	\( = s_2 \frac{U_p(x)}{\| U_p \|_{q_2}} = t_1 \frac{U_p(x)}{\| U_p' \|_{r_1}} = t_2 \frac{U_p(x)}{\| U_p' \|_{r_2}} \).
\end{equation}
where $(s_1, s_2, t_1, t_2) \in (\re_+)^4$ is a solution of \eqref{System}.
\end{theorem}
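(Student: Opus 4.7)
The key idea is that the nonlocal terms $A$ and $B$, evaluated on a fixed solution $u$, are just positive constants, so the equation reduces to the scalar problem $v'' = v^p$ after rescaling, whose unique blow up solution is $U_p$.

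My plan is as follows. Let $u$ be any solution of \eqref{BBP}. Setting
\[
	c_\lambda(u) := \lambda \frac{B\bigl(\|u\|_{q_2}, \|u'\|_{r_2}\bigr)}{A\bigl(\|u\|_{q_1}, \|u'\|_{r_1}\bigr)} > 0,
\]
the function $u$ satisfies $u''(x) = c_\lambda(u) u^p(x)$ on $I$ together with $u>0$ and the blow up condition. A simple rescaling $v(x) = c_\lambda(u)^{1/(p-1)} u(x)$ converts this to $v'' = v^p$ with the same blow up condition, so the uniqueness of $U_p$ (Proposition 1.8 of \cite{DDGR}, or Theorem 4 of \cite{Wang-Fan}) forces $v = U_p$; hence $u = c_\lambda(u)^{-1/(p-1)} U_p$. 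In particular every solution of \eqref{BBP} is a positive scalar multiple of $U_p$, which establishes the form \eqref{u_form} as soon as we identify the scaling constant with $s_1/\|U_p\|_{q_1}$.

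Next I would turn this into the system \eqref{System}. Writing $\alpha = c_\lambda(u)^{-1/(p-1)}$ and $s_i = \|u\|_{q_i}$, $t_i = \|u'\|_{r_i}$ (which are finite by \eqref{Assumption} and Lemma \ref{Lemma:U_p}), the identity $u = \alpha U_p$ gives $s_i = \alpha \|U_p\|_{q_i}$ and $t_i = \alpha \|U_p'\|_{r_i}$. Raising to the $(1-p)$-th power and using $\alpha^{1-p} = c_\lambda(u)$ yields the four equations of \eqref{System}. Conversely, given any quadruple $(s_1,s_2,t_1,t_2) \in (\re_+)^4$ satisfying \eqref{System}, I define $u(x) := s_1 U_p(x)/\|U_p\|_{q_1}$ and verify that (i) $\|u\|_{q_2}=s_2$, $\|u'\|_{r_1}=t_1$, $\|u'\|_{r_2}=t_2$ (this follows by dividing pairs of equations in \eqref{System}, which gives the ratios $s_2/s_1, t_1/s_1, t_2/s_1$ in terms of norms of $U_p$ and $U_p'$), and (ii) $u$ satisfies the PDE in \eqref{BBP} (this follows by computing $u''/u^p = (s_1/\|U_p\|_{q_1})^{1-p}$ and invoking the first equation of \eqref{System} to identify the right-hand side with $\lambda B(s_2,t_2)/A(s_1,t_1)$). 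This gives a bijection between solutions of \eqref{BBP} and solutions of \eqref{System}.

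Finally, to pass from \eqref{System} to the single equation \eqref{Single}, the ratio identities derived above express $s_2,t_1,t_2$ as fixed positive multiples of $s := s_1$, so the full quadruple is determined by $s$. Substituting these expressions back into the first equation of \eqref{System} and rearranging yields exactly $g(s) = \lambda \|U_p\|_{q_1}^{1-p}$, and every solution $s \in \re_+$ of \eqref{Single} produces a quadruple solving \eqref{System}. This gives a bijection between the two solution sets, completing the chain of equivalences.

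The bookkeeping in step two — checking that the quadruple produced from a solution of \eqref{System} really reproduces the prescribed norms of $u$ — is the only point that needs care, but it reduces to elementary ratio manipulations once the scaling $u = \alpha U_p$ has been established; everything else is immediate from the uniqueness of $U_p$.
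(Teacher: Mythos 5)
Your proposal is correct and follows essentially the same route as the paper: rescale a given solution by the constant $\bigl(\lambda B/A\bigr)^{1/(p-1)}$ (evaluated on that solution) to reduce to $v''=v^p$, invoke the uniqueness of $U_p$ to get $u=\alpha U_p$, translate the norm identities into the system \eqref{System}, and conversely reconstruct a solution of \eqref{BBP} from a quadruple via $u=s_1 U_p/\|U_p\|_{q_1}$, finally collapsing the system to \eqref{Single} through the ratio identities. No gaps; the verification steps you outline (norm ratios and $u''/u^p$) are exactly the computations carried out in the paper.
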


As corollaries of Theorem \ref{Theorem:P}, we have many bifurcation diagrams for specific $A$ and $B$ in \eqref{BBP}.
For example, 

\begin{corollary}
\label{Cor1}
Assume \eqref{Assumption} and $A(s,t) = s^{p-1} (1+t)$, $B(s,t) = s + t$ in \eqref{BBP}.
Then 
\begin{enumerate}
\item[(i)] if $0 < \la \le \dfrac{\| U_p \|_{q_1}^{p-1} \| U_p' \|_{r_1}}{\| U_p \|_{q_2} + \| U_p' \|_{r_2}}$, there is no solution to \eqref{BBP}, 
\item[(ii)] if $\la > \dfrac{\| U_p \|_{q_1}^{p-1} \| U_p' \|_{r_1}}{\| U_p \|_{q_2} + \| U_p' \|_{r_2}}$, there is a unique solution to \eqref{BBP}. 
\end{enumerate}
Also the unique solution $u_{\la}$ is of the form
\[
	u_{\la}(x) = s_{\la} \frac{U_p(x)}{\| U_p \|_{q_1}} \quad (x \in I)
\]
where
\[
	s_{\la} = \frac{\| U_p \|^p_{q_1}}{\la  \( \| U_p \|_{q_2} + \| U_p' \|_{r_2} \) - \| U_p \|_{q_1}^{p-1} \| U_p' \|_{r_1}}. 
\]
\end{corollary}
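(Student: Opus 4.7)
The plan is to apply Theorem \ref{Theorem:P} directly: the corollary will follow once I translate the specific choices $A(s,t) = s^{p-1}(1+t)$, $B(s,t) = s+t$ into a transparent expression for the scalar function $g(s)$ in \eqref{g(s)} and count the roots of the one-variable equation \eqref{Single}.

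First I would substitute the given $A$ and $B$ into \eqref{g(s)}. The numerator $A(s, \tfrac{\| U_p' \|_{r_1}}{\| U_p \|_{q_1}} s) = s^{p-1}\bigl(1 + \tfrac{\| U_p' \|_{r_1}}{\| U_p \|_{q_1}} s\bigr)$, and the denominator $B\bigl(\tfrac{\| U_p \|_{q_2}}{\| U_p \|_{q_1}} s, \tfrac{\| U_p' \|_{r_2}}{\| U_p \|_{q_1}} s\bigr) = \tfrac{\| U_p \|_{q_2} + \| U_p' \|_{r_2}}{\| U_p \|_{q_1}}\, s$, so after canceling the factor $s^{p-1}$ coming from $s^{1-p}$ I obtain
\[
	g(s) \;=\; \frac{\| U_p \|_{q_1} + \| U_p' \|_{r_1}\, s}{\bigl(\| U_p \|_{q_2} + \| U_p' \|_{r_2}\bigr)\, s}
	\;=\; \frac{\| U_p \|_{q_1}}{\bigl(\| U_p \|_{q_2} + \| U_p' \|_{r_2}\bigr)\, s} + \frac{\| U_p' \|_{r_1}}{\| U_p \|_{q_2} + \| U_p' \|_{r_2}}.
\]

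Next I would read off monotonicity: $g$ is the sum of a strictly decreasing positive function of $s$ and a positive constant, hence is strictly decreasing on $\re_+$, with $g(s) \to +\infty$ as $s \to 0^+$ and $g(s) \to \tfrac{\| U_p' \|_{r_1}}{\| U_p \|_{q_2} + \| U_p' \|_{r_2}}$ as $s \to +\infty$. Consequently \eqref{Single} has no root when $\la \| U_p \|_{q_1}^{1-p} \le \tfrac{\| U_p' \|_{r_1}}{\| U_p \|_{q_2} + \| U_p' \|_{r_2}}$, and exactly one root otherwise. Rearranging gives the threshold $\la_* = \tfrac{\| U_p \|_{q_1}^{p-1}\| U_p' \|_{r_1}}{\| U_p \|_{q_2} + \| U_p' \|_{r_2}}$, which is precisely the dichotomy stated in (i) and (ii).

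Finally, when $\la > \la_*$ I solve $g(s_\la) = \la \| U_p \|_{q_1}^{1-p}$ algebraically: multiplying through by $(\| U_p \|_{q_2} + \| U_p' \|_{r_2})\, s_\la$ and isolating $s_\la$ yields
\[
	s_\la \;=\; \frac{\| U_p \|_{q_1}^{p}}{\la\bigl(\| U_p \|_{q_2} + \| U_p' \|_{r_2}\bigr) - \| U_p \|_{q_1}^{p-1}\| U_p' \|_{r_1}},
\]
matching the expression in the statement, and then \eqref{u_form} of Theorem \ref{Theorem:P} immediately gives $u_\la(x) = s_\la\, U_p(x)/\| U_p \|_{q_1}$. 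There is no real obstacle here; the whole corollary is a bookkeeping computation on top of Theorem \ref{Theorem:P}, and the only thing to watch is that the algebraic simplification of $g(s)$ is carried out correctly so that the threshold and the explicit $s_\la$ come out in the stated form.
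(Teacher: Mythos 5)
Your proposal is correct and follows essentially the same route as the paper: compute $g(s)$ for these $A$, $B$, observe that it is strictly decreasing from $+\infty$ to $\frac{\| U_p' \|_{r_1}}{\| U_p \|_{q_2} + \| U_p' \|_{r_2}}$, and solve $g(s_\la) = \la \| U_p \|_{q_1}^{1-p}$ explicitly. Nothing is missing.
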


\begin{corollary}
\label{Cor2}
Assume \eqref{Assumption} and $A(s, t) = s^p ((t-a)^2 + b)$, $B(s,t) = s + t$ in \eqref{BBP},
where $a, b > 0$.
Then 
\begin{enumerate}
\item[(i)] if $0 < \la < \dfrac{b \| U_p \|_{q_1}^p}{\| U_p \|_{q_2} + \| U_p' \|_{r_2}}$, there is no solution to \eqref{BBP}, 
\item[(ii)] if $\la = \dfrac{b \| U_p \|_{q_1}^p}{\| U_p \|_{q_2} + \| U_p' \|_{r_2}}$, there is a unique solution to \eqref{BBP}, 
\item[(iii)] if $\dfrac{b \| U_p \|_{q_1}^p}{\| U_p \|_{q_2} + \| U_p' \|_{r_2}} < \la < \dfrac{(a^2 +b)\| U_p \|_{q_1}^p}{\| U_p \|_{q_2} + \| U_p' \|_{r_2}}$, there are just two solutions to \eqref{BBP}, 
\item[(iv)] if $\la \ge \dfrac{(a^2 + b)\| U_p \|_{q_1}^p}{\| U_p \|_{q_2} + \| U_p' \|_{r_2}}$,
there is a unique solution to \eqref{BBP}.
\end{enumerate}
Also any solution $u_{\la}$ is of the form
\[
	u_{\la}(x) = s_{\la} \frac{U_p(x)}{\| U_p \|_{q_1}} \quad (x \in I)
\]
where $s_{\la}$ is a positive solution of the quadratic equation (w.r.t. $s > 0$)
\[
	\(\frac{\| U_p' \|_{r_1}}{\| U_p \|_{q_1}} s-a\)^2 + b = \la \frac{\| U_p \|_{q_2}+\| U_p' \|_{r_2}}{\| U_p \|_{q_1}^p}.
\]
\end{corollary}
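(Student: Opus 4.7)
The plan is to apply Theorem \ref{Theorem:P} and reduce the problem to counting positive roots of a concrete quadratic. By that theorem, the number of solutions of \eqref{BBP} equals the number of $s > 0$ solving $g(s) = \la \| U_p \|_{q_1}^{1-p}$, where $g$ is defined by \eqref{g(s)}. So the entire argument is a routine substitution followed by elementary analysis of a parabola.

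First I would plug $A(s,t) = s^p((t-a)^2 + b)$ and $B(s,t) = s+t$ into \eqref{g(s)}. The $B$-term evaluates to $\frac{\| U_p \|_{q_2} + \| U_p' \|_{r_2}}{\| U_p \|_{q_1}} s$, while the $A$-term gives $s^p\left( \left(\frac{\| U_p' \|_{r_1}}{\| U_p \|_{q_1}} s - a\right)^2 + b \right)$. The prefactor $s^{1-p}$ together with $s^p / s$ cancels out all $s$-powers in front, leaving
\[
	g(s) = \frac{\| U_p \|_{q_1}}{\| U_p \|_{q_2} + \| U_p' \|_{r_2}} \left( \left( \frac{\| U_p' \|_{r_1}}{\| U_p \|_{q_1}} s - a \right)^2 + b \right).
\]
Then $g(s) = \la \| U_p \|_{q_1}^{1-p}$ rearranges exactly to the quadratic equation stated in the corollary. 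This also confirms the representation formula for $u_\la$ via \eqref{u_form}.

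Next I would count positive solutions. Setting $y = \frac{\| U_p' \|_{r_1}}{\| U_p \|_{q_1}} s$, which is a positive multiple of $s$, the equation becomes $(y-a)^2 = C\la - b$ with $C := \frac{\| U_p \|_{q_2} + \| U_p' \|_{r_2}}{\| U_p \|_{q_1}^p}$. The thresholds drop out by inspection: if $\la < b/C$ the right-hand side is negative and there are no real roots (case (i)); if $\la = b/C$ the unique root is $y = a > 0$ (case (ii)); if $\la > b/C$ there are two real roots $y = a \pm \sqrt{C\la - b}$, and both are positive iff $a - \sqrt{C\la - b} > 0$, i.e.\ iff $\la < (a^2+b)/C$, giving case (iii); finally when $\la \geq (a^2+b)/C$ the smaller root is $\leq 0$ and only $y = a + \sqrt{C\la-b}$ survives, yielding case (iv). Observing that $b/C = \frac{b \| U_p \|_{q_1}^p}{\| U_p \|_{q_2} + \| U_p' \|_{r_2}}$ and $(a^2+b)/C = \frac{(a^2+b) \| U_p \|_{q_1}^p}{\| U_p \|_{q_2} + \| U_p' \|_{r_2}}$ matches the thresholds in the statement.

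There is no real obstacle here; the only step that requires care is the bookkeeping of normalizing constants when substituting into \eqref{g(s)}, and the verification that the boundary case $\la = (a^2+b)/C$ yields exactly one positive $s$ (corresponding to the double root $y = 2a$ after the smaller root $y = 0$ is discarded). All the analytic content — existence and uniqueness of $U_p$, the integrability claims $U_p \in L^{q_i}$, $U_p' \in L^{r_i}$, and the reduction to a single scalar equation — is already absorbed into Theorem \ref{Theorem:P}.
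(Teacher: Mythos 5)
Your proposal is correct and follows essentially the same route as the paper: substitute $A$ and $B$ into \eqref{g(s)}, observe that $g$ reduces to the stated quadratic in $s$, and count its positive roots against the thresholds given by the minimum of $g$ over $s>0$ and the value $g(0)$. One cosmetic slip: at $\la = (a^2+b)\| U_p \|_{q_1}^p/(\| U_p \|_{q_2}+\| U_p' \|_{r_2})$ the roots are $y=0$ and $y=2a$ (two distinct roots, one discarded), not a double root at $y=2a$; the double root occurs only at the lower threshold $\la = b\| U_p \|_{q_1}^p/(\| U_p \|_{q_2}+\| U_p' \|_{r_2})$, but your conclusion of exactly one positive solution there is still right.
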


\begin{corollary}
\label{Cor3}
Assume \eqref{Assumption} and $A(s, t) = 2 + \sin s$, $B(s, t) = t^{1-p}$ in \eqref{BBP}.
Then 
\begin{enumerate}
\item[(i)] if $0 < \la < \| U_p' \|_{r_2}^{p-1}$, there is no solution to \eqref{BBP}, 
\item[(ii)] if $\| U_p' \|_{r_2}^{p-1} \le \la \le 3 \| U_p' \|_{r_2}^{p-1}$, there are infinitely many solutions to \eqref{BBP}. 
\item[(iii)] if $\la > 3 \| U_p' \|_{r_2}^{p-1}$, there is no solution to \eqref{BBP}. 
\end{enumerate}
Also any solution $u_{\la}$ is of the form
\[
	u_{\la}(x) = s_{\la} \frac{U_p(x)}{\| U_p \|_{q_1}} \quad (x \in I)
\]
where $s_{\la}$ is a positive solution of 
\[
	\| U_p' \|_{r_2}^{p-1} (2 + \sin s) = \la. 
\]
\end{corollary}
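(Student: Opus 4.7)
The plan is to specialize Theorem~\ref{Theorem:P} to $A(s,t) = 2 + \sin s$ and $B(s,t) = t^{1-p}$ and reduce the counting problem to analyzing a simple trigonometric equation. By Theorem~\ref{Theorem:P}, the number of solutions of \eqref{BBP} equals the number of positive solutions $s$ of the single equation $g(s) = \la \| U_p \|_{q_1}^{1-p}$, so the entire task is to compute $g$ explicitly and to count roots of the resulting equation on $(0, \infty)$.

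First I would substitute into \eqref{g(s)}. Since $A(s,t)$ depends only on its first argument, the numerator of $g$ becomes $2 + \sin s$. Since $B(s,t) = t^{1-p}$ depends only on the second argument, the denominator becomes $\(\frac{\| U_p' \|_{r_2}}{\| U_p \|_{q_1}} s\)^{1-p}$. Combining the prefactor $s^{1-p}$ with the inverse denominator, the powers of $s$ cancel and one obtains
\[
	g(s) = \frac{\| U_p' \|_{r_2}^{p-1}}{\| U_p \|_{q_1}^{p-1}}(2 + \sin s).
\]
The reduced equation $g(s) = \la \| U_p \|_{q_1}^{1-p}$ then simplifies, after multiplying through by $\| U_p \|_{q_1}^{p-1}$, to
\[
	\| U_p' \|_{r_2}^{p-1}(2 + \sin s) = \la,
\]
which is exactly the equation stated in the corollary. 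This also confirms the form \eqref{u_form} of any solution via Theorem~\ref{Theorem:P}.

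Second, I would count positive roots by examining the range and periodicity of $2 + \sin s$. Since $2 + \sin s$ takes values in $[1, 3]$ and attains every value in this interval infinitely often on $(0, \infty)$ by $2\pi$-periodicity, the equation has no positive root when $\la/\| U_p' \|_{r_2}^{p-1} \notin [1,3]$, yielding cases (i) and (iii). When $\la/\| U_p' \|_{r_2}^{p-1} \in [1,3]$, any root $s_0$ produces the infinite family $\{s_0 + 2\pi k : k \in \N\} \subset (0, \infty)$, yielding case (ii). There is no real obstacle here; the only subtle point is noticing that the endpoints $\la = \| U_p' \|_{r_2}^{p-1}$ and $\la = 3\| U_p' \|_{r_2}^{p-1}$ also belong to case (ii), because the extremal values $\sin s = \pm 1$ are themselves attained at infinitely many positive $s$.
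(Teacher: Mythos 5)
Your proposal is correct and follows the same route as the paper: specialize Theorem \ref{Theorem:P}, compute $g(s) = \bigl(\|U_p'\|_{r_2}/\|U_p\|_{q_1}\bigr)^{p-1}(2+\sin s)$, reduce \eqref{Single} to $\|U_p'\|_{r_2}^{p-1}(2+\sin s)=\la$, and count roots via the range $[1,3]$ and $2\pi$-periodicity of $2+\sin s$. The only difference is that you spell out the root-counting (including the endpoint cases) that the paper leaves as an easy observation, and your details are accurate.
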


\begin{corollary}
\label{Cor4}
Assume \eqref{Assumption} and $A(s, t) = e^s$, $B(s, t) = 1$ in \eqref{BBP}.
Then 
\begin{enumerate}
\item[(i)] if $0 < \la < \(\frac{e}{p-1}\)^{p-1} \| U_p \|_{q_1}^{p-1}$, there is no solution to \eqref{BBP}, 
\item[(ii)] if $\la = \(\frac{e}{p-1}\)^{p-1} \| U_p \|_{q_1}^{p-1}$, there is a unique solution to \eqref{BBP}, 
\item[(iii)] if $\la > \(\frac{e}{p-1}\)^{p-1} \| U_p \|_{q_1}^{p-1}$, there are just two solutions $u_{1,\la}$, $u_{2,\la}$ to \eqref{BBP}.
\end{enumerate}
Also we have
\[
	u_{1,\la}(x) = \la^{-\frac{1}{p-1}} \( 1 + \frac{1}{p-1}(1 + o(1)) \la^{-\frac{1}{p-1}} \| U_p \|_{q_1} \) U_p(x) \quad (x \in I)
\]
as $\la \to \infty$, 
and
\[
	u_{2,\la}(x) = \left\{ \log \la + (p-1) (\log \log \la)(1 + o(1)) \right\} \frac{U_p(x)}{\| U_p \|_{q_1}} \quad (x \in I)
\]
as $\la \to \infty$.
\end{corollary}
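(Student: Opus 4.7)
By Theorem \ref{Theorem:P} applied to $A(s,t)=e^s$ and $B(s,t)=1$, the number of solutions of \eqref{BBP} equals the number of positive solutions of $g(s)=\lambda\|U_p\|_{q_1}^{1-p}$, where direct substitution into \eqref{g(s)} gives the remarkably simple expression
\[
g(s)=s^{1-p}e^{s}.
\]
My plan is therefore to first perform a one-variable analysis of $g$, then extract the asymptotics of the two branches of solutions as $\lambda\to\infty$.

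A calculation yields $g'(s)=s^{-p}e^{s}(s-(p-1))$, so $g$ is strictly decreasing on $(0,p-1)$ and strictly increasing on $(p-1,\infty)$, with $g(0^+)=g(+\infty)=+\infty$ and $\min g=g(p-1)=\bigl(\tfrac{e}{p-1}\bigr)^{p-1}$. Comparing this minimum against the right-hand side $\lambda\|U_p\|_{q_1}^{1-p}$ immediately yields cases (i), (ii), (iii): no solution, one solution, or exactly two solutions $s_{1,\lambda}<p-1<s_{2,\lambda}$, depending on whether $\lambda$ is less than, equal to, or greater than $\bigl(\tfrac{e}{p-1}\bigr)^{p-1}\|U_p\|_{q_1}^{p-1}$. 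The asymptotic formulas for $u_{1,\lambda}$ and $u_{2,\lambda}$ then follow from \eqref{u_form} once I determine the behavior of $s_{1,\lambda}$ and $s_{2,\lambda}$ as $\lambda\to\infty$.

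For the small branch $s_{1,\lambda}$, since $s_{1,\lambda}\in(0,p-1)$ and $g(s_{1,\lambda})\to\infty$, monotonicity forces $s_{1,\lambda}\to0$. Rewriting the equation as
\[
s=\|U_p\|_{q_1}\,\lambda^{-\frac{1}{p-1}}\,e^{s/(p-1)},
\]
the leading order is $s_{1,\lambda}=\|U_p\|_{q_1}\lambda^{-1/(p-1)}(1+o(1))$; a single reinsertion using $e^{s/(p-1)}=1+\tfrac{s}{p-1}+O(s^2)$ gives the refined expansion matching the stated form of $u_{1,\lambda}$. For the large branch $s_{2,\lambda}$, monotonicity on $(p-1,\infty)$ and $g(s_{2,\lambda})\to\infty$ force $s_{2,\lambda}\to\infty$, and taking the logarithm of $s^{1-p}e^s=\lambda\|U_p\|_{q_1}^{1-p}$ yields
\[
s=\log\lambda+(p-1)\log s+(1-p)\log\|U_p\|_{q_1}.
\]
A first iteration gives $s_{2,\lambda}=\log\lambda\,(1+o(1))$, and substituting this into the $\log s$ term yields $\log s_{2,\lambda}=\log\log\lambda\,(1+o(1))$, hence the claimed two-term asymptotic.

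The routine calculus of $g$ and the solution-counting step are straightforward. The main delicate point is the bootstrap argument producing the second-order correction for $s_{1,\lambda}$, where I must be careful to keep the error term in the form $\tfrac{1}{p-1}(1+o(1))\lambda^{-1/(p-1)}\|U_p\|_{q_1}$ rather than absorbing it into a weaker $O$-symbol; similarly for $s_{2,\lambda}$, I need to verify that the constant term $(1-p)\log\|U_p\|_{q_1}$ is indeed absorbed into the $(p-1)(\log\log\lambda)\,o(1)$ correction, which it is because $\log\log\lambda\to\infty$.
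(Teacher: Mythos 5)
Your proposal is correct and follows essentially the same route as the paper: reduce via Theorem \ref{Theorem:P} to the scalar equation $s^{1-p}e^{s}=\la\|U_p\|_{q_1}^{1-p}$, count solutions from the calculus of $g(s)=s^{1-p}e^{s}$ (minimum $\(\frac{e}{p-1}\)^{p-1}$ at $s=p-1$), and then bootstrap the two branches $s_{1,\la}\to0$, $s_{2,\la}\to\infty$ once to get the stated second-order terms. The only cosmetic difference is that for the small branch you expand $e^{s/(p-1)}$ in the exponential form of the equation, while the paper takes logarithms and solves for the correction $\delta$; the two computations are equivalent.
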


\vspace{1em}
In the previous paper \cite{Inaba-TF}, we studied the most primitive problem
\[
	\begin{cases}
	&\(\| u \|_{L^q(I)}^q + b \)^r u''(x) = \la u^p(x), \quad x \in I = (-1,1), \\
	&u(x) > 0, \quad x \in I, \\
	&\lim_{x \to \pm 1} u(x) = +\infty,
	\end{cases}
\]
where $p > 1$, $0 < q < \frac{p-1}{2}$, $r > 0$, $b \ge 0$, and $\la > 0$.
For this problem, the precise number of solutions according to the size of the bifurcation parameter $\la > 0$
and the precise asymptotic behavior of solutions as $\la \to \infty$ are obtained so far.
In this paper, we improve the result drastically, in the sense that we can treat much more general nonlocal terms 
represented by continuous functions $A(\cdot, \cdot)$ and $B(\cdot, \cdot)$ of the norms of unknown functions $u$ and $u'$ in appropriate Lebesgue spaces. 

\vspace{1em}
For bifurcation analysis on the Dirichlet problems with Kirchhoff type nonlocal terms of the form 
\[
	\begin{cases}
	&-A\(\| u \|_{q_1}, \| u' \|_{r_1} \) u''(x) = \la B\( \| u \|_{q_2}, \| u' \|_{r_2}\) u^p(x), \quad x \in I, \\
	&u(x) > 0, \quad x \in I, \\
	&u(\pm 1) = 0,
	\end{cases}
\]
where $p > 1$ and $\la > 0$, 
we refer the readers to a series of works by T. Shibata \cite{Shibata(JMAA)}, \cite{Shibata(BVP)}, \cite{Shibata(ANONA)}, \cite{Shibata(QTDS)}. 

\vspace{1em}
The problem \eqref{BBP} is a one-dimensional version of much more general boundary blow up problem with nonlocal terms
\begin{equation*}
	\begin{cases}
	&A\( \| u \|_{L^{q_1}(\Omega)}, \| \nabla u \|_{L^{r_1}(\Omega)} \) \Delta u = \la B\( \| u \|_{L^{q_2}(\Omega)}, \| \nabla u \|_{L^{r_2}(\Omega)} \) f(u) \quad \text{in} \ \Omega, \\
	&u(x) > 0 \quad x \in \Omega, \\
	&u(x) \to +\infty \quad \text{as} \ {\rm dist }(x, \pd\Omega) \to 0,
	\end{cases}
\end{equation*}
where $\Omega$ is a bounded domain in $\re^N$, $N \ge 1$, 
$f$ is a continuous nonlinearity, $\la > 0$, $q_1, q_2, r_1, r_2 > 0$,
and $A, B \in C(\re_+ \times \re_+ ; \re_+)$ are continuous functions.
As far as we know, this problem has not been considered in the literature.
We believe that the study of \eqref{BBP} will be helpful when we treat much more general problem above in the future.

\section{Some computation of $U_p$}

Recall that $U_p$ is the unique solution of the problem \eqref{Eq:U_p} for $p > 1$.

\begin{lemma}
\label{Lemma:U_p}
$U_p \in L^q(I)$ if and only if $0 < q < \frac{p-1}{2}$, and $U_p' \in L^r(I)$ if and only if $0 < r < \frac{p-1}{p+1}$.
Moreover,
\begin{align}
\label{mu_p}
	&\mu_p := \min_{x \in I} U_p(x) = U_p(0) = \( \sqrt{\frac{p+1}{2}} L_p \)^{\frac{2}{p-1}}, \\ 
\label{U_p-norm}
	&\| U_p \|_{L^q(I)}^q = \sqrt{\frac{2}{p+1}}\mu_p^{\frac{2q-p+1}{2}} \mathrm{B}\(\frac{p-2q-1}{2(p+1)},\frac{1}{2}\),\\
\label{U_p'-norm}
	&\| U_p' \|_{L^r(I)}^r = \(\frac{2}{p+1}\)^{\frac{r+1}{2}} \mu_p^{\frac{(p+1)(r-1)}{2} + 1} \mathrm{B}\(\frac{(1-r)(p+1)-2}{2(p+1)},\frac{r+1}{2}\),
\end{align}
where 
\[
	L_p = \int_{1}^{\infty}\frac{dt}{\sqrt{t^{p+1}-1}} = \(\frac{1}{p+1}\) \mathrm{B}\(\frac{p-1}{2(p+1)}, \frac{1}{2}\)
\]
and ${\rm B}(x,y) = \int_0^1 t^{x-1}(1-t)^{y-1} dt$ denotes the Beta function.
\end{lemma}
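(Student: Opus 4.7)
The plan is to use the one-dimensional energy (first integral) of \eqref{Eq:U_p} to reduce everything to explicit integrals over $(\mu_p,\infty)$ that, after a standard substitution, become incomplete Beta integrals. Since $U_p$ is even and positive, $U_p'(0)=0$ and $U_p(0)=\min_I U_p=:\mu_p$; moreover $U_p''=U_p^p>0$ forces $U_p$ to be strictly convex. Multiplying $U_p''=U_p^p$ by $U_p'$ and integrating from $0$ to $x>0$ yields
\begin{equation*}
\frac{(U_p'(x))^2}{2}=\frac{U_p(x)^{p+1}-\mu_p^{p+1}}{p+1},
\qquad
U_p'(x)=\sqrt{\tfrac{2}{p+1}}\sqrt{U_p(x)^{p+1}-\mu_p^{p+1}}\quad(x>0).
\end{equation*}

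Next I separate variables and integrate from $x=0$ to $x=1$, using $U_p(0)=\mu_p$ and $U_p(1^-)=+\infty$:
\begin{equation*}
\int_{\mu_p}^{\infty}\frac{du}{\sqrt{u^{p+1}-\mu_p^{p+1}}}=\sqrt{\tfrac{2}{p+1}}.
\end{equation*}
The rescaling $u=\mu_p t$ turns the left side into $\mu_p^{-(p-1)/2}L_p$, and solving for $\mu_p$ gives \eqref{mu_p}. The Beta representation of $L_p$ comes from the substitution $s=t^{-(p+1)}$, which maps $(1,\infty)\to(0,1)$ and converts $L_p$ into $\frac{1}{p+1}\int_0^1 s^{(p-1)/(2(p+1))-1}(1-s)^{-1/2}ds$.

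For the $L^q$-norm I use evenness and the same change of variable $u=U_p(x)$ on $(0,1)$ (with $dx=\sqrt{(p+1)/2}\,du/\sqrt{u^{p+1}-\mu_p^{p+1}}$), followed by $u=\mu_p t$:
\begin{equation*}
\|U_p\|_q^q=2\sqrt{\tfrac{p+1}{2}}\,\mu_p^{q+1-(p+1)/2}\int_1^{\infty}\frac{t^q\,dt}{\sqrt{t^{p+1}-1}}.
\end{equation*}
Applying $s=t^{-(p+1)}$ again reduces the remaining integral to $\frac{1}{p+1}\mathrm{B}\bigl(\frac{p-2q-1}{2(p+1)},\frac12\bigr)$, which converges precisely when $q<(p-1)/2$ (since the other Beta argument $\frac12$ is automatic and it is the $s\to 0^+$ behaviour that forces the condition). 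Collecting constants, $\frac{2}{p+1}\sqrt{(p+1)/2}=\sqrt{2/(p+1)}$, yielding \eqref{U_p-norm}. The same argument shows $U_p\notin L^q(I)$ for $q\ge(p-1)/2$ because the Beta integral diverges.

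For the $L^r$-norm of $U_p'$ I substitute the energy formula for $U_p'$ itself and change variables to $u$ once more:
\begin{equation*}
\|U_p'\|_r^r=2\bigl(\tfrac{2}{p+1}\bigr)^{r/2}\sqrt{\tfrac{p+1}{2}}\int_{\mu_p}^{\infty}\!\!(u^{p+1}-\mu_p^{p+1})^{(r-1)/2}\,du,
\end{equation*}
then $u=\mu_p t$ and $s=t^{-(p+1)}$ give the Beta function $\frac{1}{p+1}\mathrm{B}\bigl(\frac{(1-r)(p+1)-2}{2(p+1)},\frac{r+1}{2}\bigr)$, convergent iff $(1-r)(p+1)>2$, i.e.\ iff $r<(p-1)/(p+1)$. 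Combining $2\cdot\frac{1}{p+1}\cdot(\frac{2}{p+1})^{r/2}\cdot\sqrt{(p+1)/2}=(\frac{2}{p+1})^{(r+1)/2}$ and collecting the $\mu_p$-exponent $1+(p+1)(r-1)/2$ gives \eqref{U_p'-norm}.

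The calculation is essentially routine; the only point that requires care is bookkeeping the constants and exponents after the two successive substitutions. The necessity of the sharp thresholds $q<(p-1)/2$ and $r<(p-1)/(p+1)$ is read off from the divergence of the corresponding Beta integrals near $s=0$, which I expect to be the only place where one must argue rather than compute.
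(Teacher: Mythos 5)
Your proposal is correct and uses essentially the same method as the paper: the first integral $U_p'=\sqrt{\tfrac{2}{p+1}\,(U_p^{p+1}-\mu_p^{p+1})}$ from the time map, followed by the substitutions $u=\mu_p t$ and $s=t^{-(p+1)}$ reducing everything to Beta integrals, and your constants and exponents all check out against \eqref{mu_p}, \eqref{U_p-norm} and \eqref{U_p'-norm}. The only difference is one of presentation: the paper quotes \eqref{mu_p} and \eqref{U_p-norm} from the earlier work \cite{Inaba-TF} and only carries out the computation for \eqref{U_p'-norm}, whereas you re-derive all three (and make the sharp integrability thresholds explicit), which is a self-contained version of the same argument.
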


\begin{proof}
Formulae \eqref{mu_p} and \eqref{U_p-norm} are proven in \cite{Inaba-TF}.
We can prove \eqref{U_p'-norm} similarly.
By a time map method (see Shibata \cite{Shibata(JMAA)}),
we have
\begin{equation}
\label{Uprime}
	U_p'(x) = \sqrt{\frac{2}{p+1}(U_p^{p+1}(x)-\mu_p^{p+1})}, \quad (0 \le x < 1).
\end{equation}
By \eqref{Uprime} and the change of variables $s = U_p(x)$ and $s= \mu_p t$, we compute
\begin{align*}
	\| U_p' \|_{L_r(I)}^r &= 2\int_0^1 |U_p'(x)|^r dx \\
	&\overset{\eqref{Uprime}}{=} 2\int_0^1 \( \sqrt{\frac{2}{p+1}(U_p^{p+1}(x)-\mu_p^{p+1})} \)^{r-1} U_p'(x) dx \\
	&=2\( \frac{2}{p+1} \)^{\frac{r-1}{2}} \int_{\mu_p}^{\infty} \(s^{p+1}-\mu_p^{p+1}\)^{\frac{r-1}{2}} ds \\
	&=2\( \frac{2}{p+1} \)^{\frac{r-1}{2}} \mu_p^{\frac{(r-1)(p+1)}{2}} \int_{1}^{\infty} \(t^{p+1}- 1 \)^{\frac{r-1}{2}} \mu_p dt \\
	&=\(\frac{2}{p+1} \)^{\frac{r-1}{2}+1} \mu_p^{\frac{(r-1)(p+1)}{2} +1} \int_{1}^{\infty} \(t^{p+1}- 1 \)^{\frac{r-1}{2}} dt \\
	&= \(\frac{2}{p+1}\)^{\frac{r+1}{2}} \mu_p^{\frac{(p+1)(r-1)}{2} + 1} \mathrm{B}\(\frac{(1-r)(p+1)-2}{2(p+1)},\frac{r+1}{2}\).
\end{align*}
\end{proof}

\begin{remark}
By a time map method, we can also obtain a pointwise expression of $U_p$:
\begin{align*}
	&U_p(x) = \mu_p F_p^{-1}(L_p x), \\
	&U_p'(x) = L_p \sqrt{ \(F_p^{-1}(L_p x)\)^{p+1} - 1}
\end{align*}
for $x \in I$, where
\[
	F_p(y) = \int_1^y \frac{ds}{\sqrt{s^{p+1}-1}} \quad (y \ge 1)
\]
and $F_p^{-1}$ is the inverse function of monotone increasing function $F_p$.
Note that $F_p(1) = 0$ implies $F_p^{-1}(0) = 1$ and $F_p(\infty) = L_p$ implies $F_p^{-1}(L_p) = \infty$.
\end{remark}

\section{Proof of Theorem \ref{Theorem:P}.}

In this section, we prove Theorem \ref{Theorem:P}.

\begin{proof}
Let $u \in C^2(I)$ be any solution of \eqref{BBP} and put 
\[
	v = \gamma u
\] 
where $\gamma > 0$ be chosen later.
Then
\begin{align*}
	v''(x) &= \gamma u''(x) \overset{\eqref{BBP}}{=} \gamma \la \dfrac{B(\| u \|_{q_2}, \| u' \|_{r_2})}{A(\| u \|_{q_1}, \| u \|_{r_1})} u^p(x) \\
	&= \gamma \la \frac{B}{A} \(\frac{v(x)}{\gamma}\)^p \\
	&= \underbrace{\( \gamma^{1-p} \la \frac{B}{A} \)}_{=1} v^p(x)
\end{align*}
If we take 
\[
	\gamma = \( \la \frac{B(\| u \|_{q_2}, \| u' \|_{r_2})}{A(\| u \|_{q_1}, \| u' \|_{r_1})} \)^{\frac{1}{p-1}},
\]
then $v$ solves
\[
	\begin{cases}
	v''(x) = v^p(x), \quad x \in I = (-1,1), \\
	v(x) > 0, \quad x \in I, \\
	v(\pm 1) = +\infty,
	\end{cases}
\]
and by the uniqueness, $v \equiv U_p$. 
This implies that 
\begin{equation}
\label{u_form}
	u(x) = \gamma^{-1} U_p(x), \quad u'(x) = \gamma^{-1} U_p'(x).
\end{equation}
Define
\[
	s_1 = \| u \|_{q_1}, \quad s_2 = \| u \|_{q_2}, \quad t_1 = \| u' \|_{r_1}, \quad t_2 = \| u' \|_{r_2}.
\]
Then by \eqref{u_form}, we have
\[
	\begin{cases}
	&s_1 = \gamma^{-1} \| U_p \|_{q_1}, \\
	&s_2 = \gamma^{-1} \| U_p \|_{q_2}, \\
	&t_1 = \gamma^{-1} \| U_p' \|_{r_1}, \\
	&t_2 = \gamma^{-1} \| U_p' \|_{r_2},
	\end{cases}
\]
which is equivalent to \eqref{System}.
This shows that 
\[
	(s_1, s_2, t_1, t_2) = (\| u \|_{q_1}, \| u \|_{q_2}, \| u' \|_{r_1}, \| u' \|_{r_2})
\]
is a solution to \eqref{System} and thus
\[
	\sharp \{ u : \text{solutions of \eqref{BBP}} \} \le \sharp \{ (s_1, s_2, t_1, t_2) \in (\re_+)^4 : \text{solutions of \eqref{System}} \},
\]
where $\sharp A$ denotes the cardinality of the set $A$.

On the other hand, let $(s_1, s_2, t_1, t_2) \in (\re_+)^4$ be any solution to \eqref{System}.
Note that by \eqref{System}, we see
\begin{align}
\label{System2}
	&\frac{s_1}{\| U_p \|_{q_1}} = \frac{s_2}{\| U_p \|_{q_2}} = \frac{t_1}{\| U_p' \|_{r_1}} = \frac{t_2}{\| U_p' \|_{r_2}} \\
	&= \left\{ \la \frac{B(s_2, t_2)}{A(s_1, t_1)} \right\}^{\frac{1}{1-p}}. \notag
\end{align}
Thus if we define 
\[
	u(x) = s_1 \frac{U_p(x)}{\| U_p \|_{q_1}} \(= s_2 \frac{U_p(x)}{\| U_p \|_{q_2}} = t_1 \frac{U_p(x)}{\| U_p' \|_{r_1}} = t_2 \frac{U_p(x)}{\| U_p' \|_{r_2}} \),
\]
then we have $u(x) > 0$, $u(\pm 1) = +\infty$, and 
\[
	s_1 = \| u \|_{q_1}, \quad s_2 = \| u \|_{q_2}, \quad t_1 = \| u' \|_{r_1}, \quad t_2 = \| u' \|_{r_2}.
\]
Moreover, by the definition of $u$, we have
\begin{align*}
	A\(\| u \|_{q_1}, \| u' \|_{r_1}\) u''(x) &= A(s_1, t_1) u''(x) \\
	&= A(s_1, t_1) \frac{s_1}{\| U_p \|_{q_1}} U_p''(x) \\
	&\overset{\eqref{Eq:U_p}}{=} A(s_1, t_1) \frac{s_1}{\| U_p \|_{q_1}} U_p^p(x) \\
	&= A(s_1, t_1) \frac{s_1}{\| U_p \|_{q_1}} \(\frac{\| U_p \|_{q_1}}{s_1} u(x) \)^p \\
	&= A(s_1, t_1) s_1^{1-p} \| U_p \|_{q_1}^{p-1} u^p(x) \\
	&\overset{\eqref{System}}= \la B(s_2, t_2) u^p(x).
\end{align*}
This shows that
\[
	\sharp \{ u : \text{solutions of \eqref{BBP}} \} \ge \sharp \{ (s_1, s_2, t_1, t_2) \in (\re_+)^4 : \text{solutions of \eqref{System}} \}.
\]
Thus the number of solutions of \eqref{BBP} and that of \eqref{System} are the same.

Also by \eqref{System2}, we can rewrite the system of equations \eqref{System} into a single equation for $s = s_1$ 
\[
	\frac{s_1}{\| U_p \|_{q_1}}
	= \left\{ \la \dfrac{B\(\dfrac{\| U_p \|_{q_2}}{\| U_p \|_{q_1}} s_1, \dfrac{\| U_p' \|_{r_2}}{\| U_p \|_{q_1}} s_1 \)}
{A\(s_1, \dfrac{\| U_p' \|_{r_1}}{\| U_p \|_{q_1}} s_1 \)} \right\}^{\frac{1}{1-p}} 
\]
which is equivalent to \eqref{Single} with $g(s)$ in \eqref{g(s)}.
Thus the number of solutions of \eqref{System} and that of \eqref{Single} are also the same. 
\end{proof}

\section{Proof of corollaries.}

In this section, we prove corollaries in \S 1.
By Theorem \ref{Theorem:P}, the number of solutions of \eqref{BBP} is the same as the number of solutions of \eqref{Single}.
Also we have the expression of solution $u(x) = s \frac{U_p(x)}{\| U_p \|_{q_1}}$ to \eqref{BBP} where $s > 0$ is any solution of \eqref{Single}. 
Thus we just need to solve the equation \eqref{Single} with $\la > 0$ for specific $A$ and $B$.

\vspace{1em}
{\it Proof of Corollary \ref{Cor1}}:\hspace{1em}
Since $A(s, t) = s^{p-1} (1 + t)$ and $B(s, t) = s + t$, a simple computation shows that the function $g(s)$ defined in \eqref{g(s)} becomes
\[
	g(s) = s^{1-p} 
\dfrac{A\(s, \dfrac{\| U_p' \|_{r_1}}{\| U_p \|_{q_1}} s \)}{B\(\dfrac{\| U_p \|_{q_2}}{\| U_p \|_{q_1}}s, \dfrac{\| U_p' \|_{r_2}}{\| U_p \|_{q_1}}s \)}
= \frac{1}{\| U_p \|_{q_2} + \| U_p' \|_{r_2}} \( \frac{ \| U_p \|_{q_1} + \| U_p' \|_{r_1} s}{s} \).
\]
Note that $g(s)$ is monotonically decreasing for $s > 0$, 
$\lim_{s \to +0} g(s) = +\infty$ and $\lim_{s \to +\infty} g(s) = \frac{\| U_p' \|_{r_1}}{\| U_p \|_{q_2} + \| U_p' \|_{r_2}}$.
Thus the equation \eqref{Single} has a solution if and only if 
\[
	\la \| U_p \|_{q_1}^{1-p} > \frac{\| U_p' \|_{r_1}}{\| U_p \|_{q_2} + \| U_p' \|_{r_2}}
\]
and the unique solution $s_{\la} > 0$ of \eqref{Single} in this case is
\[
	s_{\la} = \frac{\| U_p \|^p_{q_1}}{\la  \( \| U_p \|_{q_2} + \| U_p' \|_{r_2} \) - \| U_p \|_{q_1}^{p-1} \| U_p \|_{r_1}}. 
\]
Note that the solution of \eqref{BBP} (with $A, B$ as above) has the form $u(x) = \la \frac{U_p(x)}{\| U_p \|_{q_1}}$.
This proves Corollary \ref{Cor1}.
\qed

\vspace{1em}
{\it Proof of Corollary \ref{Cor2}}:\hspace{1em}
Since $A(s, t) = s^p \{ (t-a)^2 + b \}$ and $B(s, t) = s + t$, the function $g(s)$ defined in \eqref{g(s)} becomes
\begin{align*}
	g(s) &= s^{1-p} 
\dfrac{A\(s, \dfrac{\| U_p' \|_{r_1}}{\| U_p \|_{q_1}} s \)}{B\(\dfrac{\| U_p \|_{q_2}}{\| U_p \|_{q_1}}s, \dfrac{\| U_p' \|_{r_2}}{\| U_p \|_{q_1}}s \)} \\
&= \frac{\| U_p \|_{q_1}}{\| U_p \|_{q_2} + \| U_p' \|_{r_2}} \left\{ \(\frac{\| U_p' \|_{r_1}}{\| U_p \|_{q_1}} s - a \)^2 + b \right\}.
\end{align*}
Thus $g(s)$ is a quadratic function of $s > 0$ with 
\[
	\min_{s > 0} g(s) = \frac{\| U_p \|_{q_1}}{\| U_p \|_{q_2} + \| U_p' \|_{r_2}} b, \quad g(0) = \frac{\| U_p \|_{q_1}}{\| U_p \|_{q_2} + \| U_p' \|_{r_2}} (a^2 + b).
\]
Now, the equation \eqref{Single} 
\[
	\frac{\| U_p \|_{q_1}}{\| U_p \|_{q_2} + \| U_p' \|_{r_2}} \left\{ \(\frac{\| U_p' \|_{r_1}}{\| U_p \|_{q_1}} s - a \)^2 + b \right\}
	= \la \| U_p \|_{q_1}^{1-p}
\]
is a quadratic equation w.r.t. $s > 0$.
Thus the number of positive solutions of \eqref{Single} according to the value of $\la > 0$ is:
\begin{enumerate}
\item[(i)] $0$, if $0 < \la \| U_p \|_{q_1}^{1-p} < \frac{\| U_p \|_{q_1}}{\| U_p \|_{q_2} + \| U_p' \|_{r_2}} b$,
\item[(ii)] $1$, if $\la \| U_p \|_{q_1}^{1-p} = \frac{\| U_p \|_{q_1}}{\| U_p \|_{q_2} + \| U_p' \|_{r_2}} b$,
\item[(iii)] $2$, if $\frac{\| U_p \|_{q_1}}{\| U_p \|_{q_2} + \| U_p' \|_{r_2}} b < \la \| U_p \|_{q_1}^{1-p} < \frac{\| U_p \|_{q_1}}{\| U_p \|_{q_2} + \| U_p' \|_{r_2}} (a^2 + b)$,
\item[(iv)] $1$, if $\la \| U_p \|_{q_1}^{1-p} \ge \frac{\| U_p \|_{q_1}}{\| U_p \|_{q_2} + \| U_p' \|_{r_2}} (a^2 + b)$.
\end{enumerate}
This completes the proof of Corollary \ref{Cor2}.
\qed

\vspace{1em}
{\it Proof of Corollary \ref{Cor3}}:\hspace{1em}
$A(s, t) = 2 + \sin s$ and $B(s, t) = t^{1-p}$ implies that the function $g(s)$ is
\begin{align*}
	g(s) &= s^{1-p} 
\dfrac{A\(s, \dfrac{\| U_p' \|_{r_1}}{\| U_p \|_{q_1}} s \)}{B\(\dfrac{\| U_p \|_{q_2}}{\| U_p \|_{q_1}}s, \dfrac{\| U_p' \|_{r_2}}{\| U_p \|_{q_1}}s \)} \\
	&= \( \frac{\| U_p' \|_{r_2}}{\| U_p \|_{q_1}} \)^{p-1} (2 + \sin s).
\end{align*}
Thus the equation \eqref{Single} is reduced to
\[
	\| U_p' \|_{r_2}^{p-1} (2 + \sin s) = \la.
\]
By this observation, the result of Corollary \ref{Cor3} is easily obtained.
\qed

\vspace{1em}
{\it Proof of Corollary \ref{Cor4}}:\hspace{1em}
Since $A(s, t) = e^s$ and $B(s, t) = 1$, the function $g(s)$ defined in \eqref{g(s)} becomes
\[
	g(s) = e^s s^{1-p}.
\]
A computation yields that $\min_{s > 0} g(s) = g(p-1) = \(\frac{e}{p-1}\)^{p-1}$ and $\lim_{s \to +0} g(s) = \lim_{s \to \infty} g(s) = +\infty$.
Thus the number of solutions to \eqref{Single} in this case is
\begin{enumerate}
\item[(i)] $0$, if $0 < \la \| U_p \|_{q_1}^{1-p} < \(\frac{e}{p-1}\)^{p-1}$,
\item[(ii)] $1$, if $\la \| U_p \|_{q_1}^{1-p} = \(\frac{e}{p-1}\)^{p-1}$,
\item[(iii)] $2$, if $\la \| U_p \|_{q_1}^{1-p} > \(\frac{e}{p-1}\)^{p-1}$.
\end{enumerate}
Let $0 < s_{1, \la} < s_{2, \la}$ be two solutions in (iii). Then we see $s_{1,\la} \to 0$ and $s_{2,\la} \to \infty$ as $\la \to \infty$.
Since
\[
	e^{s_{i,\la}} = s_{i,\la}^{p-1} \la \| U_p \|_{q_1}^{1-p}
\]
for $i=1,2$, by taking $\log$ of the above equation, we obtain
\begin{equation}
\label{sila}
	s_{i, \la} = (p-1) \log s_{i,\la} + \log \la + (1-p) \log \| U_p \|_{q_1}.
\end{equation}
Since $s_{1,\la} = o(1)$ as $\la \to \infty$, we have
\[
	o(1) = (p-1) \log s_{1,\la} + \log \la + (1-p) \log \| U_p \|_{q_1}
\]
as $\la \to \infty$.
Thus we see
\begin{equation}
\label{s1la}
	s_{1,\la} = \la^{-\frac{1}{p-1}} \| U_p \|_{q_1} e^{o(1)} = \la^{-\frac{1}{p-1}} \| U_p \|_{q_1} (1 + \delta)
\end{equation}
where $\delta \to 0$ as $\la \to \infty$.
Inserting \eqref{s1la} into the both sides of \eqref{sila} and computing, we have
\[
	\la^{-\frac{1}{p-1}} \| U_p \|_{q_1} (1 + \delta) = (p-1) \log ( 1 + \delta) = (p-1) \delta(1 + o(1)).
\]
From this, we have
\begin{align*}
	\la^{-\frac{1}{p-1}} \| U_p \|_{q_1} = (p-1) \frac{\delta}{1 + \delta} (1 + o(1)) = (p-1) \delta (1 + o(1)), \\
\end{align*}
Thus
\begin{align*}
	\delta = \frac{1}{p-1} (1 + o(1)) \la^{-\frac{1}{p-1}} \| U_p \|_{q_1}.
\end{align*}
Inserting this into \eqref{s1la} and recall $u_{1,\la} = s_{1,\la} \frac{U_p(x)}{\| U_p \|_{q_1}}$,
we obtain the formula for $u_{1,\la}$:
\[
	u_{1,\la}(x) = \la^{-\frac{1}{p-1}} \( 1 + \frac{1}{p-1}(1 + o(1)) \la^{-\frac{1}{p-1}} \| U_p \|_{q_1} \) U_p(x) \quad (x \in I).
\]

Similarly, from \eqref{sila}, we have
\[
	s_{2, \la} = (p-1) \log s_{2,\la} + \log \la + (1-p) \log \| U_p \|_{q_1}.
\]
Now, since $s_{2, \la} \to \infty$ as $\la \to \infty$, we have
\[
	s_{2, \la} \(1 - (p-1) \underbrace{\frac{\log s_{2,\la}}{s_{2,\la}}}_{=o(1)} \) = \log \la + (1-p) \log \| U_p \|_{q_1} = \log \la + o(\log \la)
\]
as $\la \to \infty$.
From this, we have
\begin{equation}
\label{s2la}
	s_{2,\la} = \frac{1}{1-o(1)} (\log \la)(1 + o(1)) = (1+ \eps) \log \la
\end{equation}
as $\la \to \infty$, where $\eps \to 0$ as $\la \to \infty$. 
Again, inserting this into \eqref{sila}, we have
\[
	\eps \log \la = (p-1) \log (1 + \eps) + (p-1) \log \log \la + (1-p) \log \| U_p \|_{q_1},
\]
which implies
\[
	\eps = (p-1)\frac{\log \log \la}{\log \la} \left\{ 1 - \frac{\| U_p \|_{q_1}}{\log \log \la} \right\} = (p-1)\frac{\log \log \la}{\log \la} (1 + o(1))
\]
as $\la \to \infty$. Then coming back to \eqref{s2la} and recalling that $u_{2,\la} = s_{2,\la} \frac{U_p(x)}{\| U_p \|_{q_1}}$,
we obtain the asymptotic formula for $u_{2,\la}$:
\[
	u_{2,\la}(x) = \left\{ \log \la + (p-1) (\log \log \la)(1 + o(1)) \right\} \frac{U_p(x)}{\| U_p \|_{q_1}} \quad (x \in I)
\]
as $\la \to \infty$.
\qed

\section{Exponential nonlinearity case.}

In this section, we study the following boundary blow up problem with exponential nonlinearity:
\begin{equation}
\label{BBP_E}
	\begin{cases}
	&A\(\| u' \|_{r_1} \) u''(x) = \la B\(\| u' \|_{r_2}\) e^{u(x)}, \quad x \in I = (-1, 1), \\
	&\lim_{x \to \pm 1} u(x) = +\infty,
	\end{cases}
\end{equation}
where $A, B$ are positive continuous functions, $\la > 0$, and $r_1, r_2 > 0$. 
$u \in C^2(I)$ is a solution if $u' \in L^{\max\{r_1, r_2}(I)\}$ and $u$ satisfies \eqref{BBP_E} pointwisely.
Note that a solution $u$ may change its sign on $I$.
For \eqref{BBP_E}, we assume
\begin{equation}
\label{Assumption_E}
	0< r_1, r_2 < 1.
\end{equation}

To study \eqref{BBP_E}, the unique solution $U_{\la}$ to the problem
\begin{equation}
\label{Eq:U_la}
	\begin{cases}
	U_{\la}''(x) = \la e^{U_{\la}(x)}, \quad x \in I = (-1,1), \\
	\lim_{x \to \pm 1} U_{\la}(x) = +\infty
	\end{cases}
\end{equation}
is very important.
For the existence and the uniqueness of $U_{\la}$ for any $\la > 0$, see \cite{CDG} Theorem 1.2., and the references there in. 

First, we prove the following lemma.

\begin{lemma}
\label{Lemma:U_la}
For any $\la > 0$, let $U_{\la}$ be the unique solution to \eqref{Eq:U_la}.
Then $U_{\la}' \in L^r(I)$ if and only if $0 < r < 1$, and
\begin{align}
\label{mu_la}
	&\mu_{\la} := \min_{x \in I} U_{\la}(x) = U_{\la}(0) = \log \(\frac{\pi^2}{2 \la}\), \\ 
\label{U_la-form}
	&U_{\la}(x) = \log \(\frac{\pi^2}{2 \la}\) - 2 \log \( \cos \( \frac{\pi x}{2} \) \), \quad x \in I, \\ 
\label{U_la-prime-norm}
	&\| U_{\la}' \|_{L_r(I)}^r = 2 \pi^{r-1} \mathrm{B}\(\frac{1-r}{2},\frac{r+1}{2}\).
\end{align}
\end{lemma}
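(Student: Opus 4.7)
The plan is to mimic the time-map strategy used in Lemma \ref{Lemma:U_p}, exploiting that $U_\lambda$ can in fact be written down in closed form, and then to read off both the admissible range of $r$ and the Beta-function identity directly from the explicit expression for $U_\lambda'$.

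First, by the uniqueness of the solution to \eqref{Eq:U_la} together with the invariance of the problem under $x \mapsto -x$, the function $U_\lambda$ must be even, so $U_\lambda'(0) = 0$ and $U_\lambda$ attains its minimum at $x = 0$; set $\mu_\lambda := U_\lambda(0)$. Multiplying the ODE by $U_\lambda'$ and integrating from $0$ to $x \in [0,1)$ yields the first-order relation
\[
	U_\lambda'(x) = \sqrt{2 \lambda \(e^{U_\lambda(x)} - e^{\mu_\lambda}\)} \quad (0 \le x < 1),
\]
which is the exponential analogue of \eqref{Uprime}.

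Next I would apply the change of variables $s = U_\lambda(t)$ to convert $\int_0^x U_\lambda'(t)/\sqrt{e^{U_\lambda(t)} - e^{\mu_\lambda}}\,dt = \sqrt{2\lambda}\, x$ into
\[
	\int_{\mu_\lambda}^{U_\lambda(x)} \frac{ds}{\sqrt{e^s - e^{\mu_\lambda}}} = \sqrt{2\lambda}\, x.
\]
Unlike the $U_p$ case, this integral is elementary: the substitution $u = \sqrt{e^s - e^{\mu_\lambda}}$ gives $ds = 2u\,du/(u^2 + e^{\mu_\lambda})$ and reduces the left-hand side to $\frac{2}{e^{\mu_\lambda/2}} \arctan\!\(e^{-\mu_\lambda/2}\sqrt{e^{U_\lambda(x)}-e^{\mu_\lambda}}\)$. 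Letting $x \to 1^-$ and using $\arctan \to \pi/2$ gives $\pi/e^{\mu_\lambda/2} = \sqrt{2\lambda}$, which proves \eqref{mu_la}. Solving the same identity for $U_\lambda(x)$ at general $x$ yields $e^{U_\lambda(x) - \mu_\lambda} = \sec^2(\pi x/2)$, which is exactly the closed form \eqref{U_la-form}.

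From \eqref{U_la-form} one computes directly $U_\lambda'(x) = \pi \tan(\pi x/2)$. Near the endpoints, $U_\lambda'(x) \sim \pi/(\pi/2)(1 - |x|)^{-1}$, so $|U_\lambda'|^r$ is integrable on $I$ precisely when $0 < r < 1$, giving the membership statement. For the explicit norm in \eqref{U_la-prime-norm}, I would substitute $y = \pi x/2$:
\[
	\| U_\lambda' \|_r^r = 2 \int_0^1 \pi^r \tan^r\!\(\tfrac{\pi x}{2}\) dx = 4 \pi^{r-1} \int_0^{\pi/2} \sin^r y \cos^{-r} y\, dy,
\]
and then invoke the classical Beta-integral identity $2\int_0^{\pi/2} \sin^{2x-1}\theta \cos^{2y-1}\theta\, d\theta = \mathrm{B}(x,y)$ with $x = (r+1)/2$, $y = (1-r)/2$, together with the symmetry $\mathrm{B}(x,y) = \mathrm{B}(y,x)$, to arrive at \eqref{U_la-prime-norm}. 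The only mildly delicate step is bookkeeping when passing to the limit $x \to 1^-$ to identify $\mu_\lambda$; everything else is routine once the explicit formula is in hand.
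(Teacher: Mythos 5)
Your proposal is correct and follows essentially the same route as the paper: the first integral (time map) $U_\la'(x)=\sqrt{2\la(e^{U_\la(x)}-e^{\mu_\la})}$, integration of $ds/\sqrt{e^s-e^{\mu_\la}}$ via an arctangent to identify $\mu_\la$ and the closed form \eqref{U_la-form}, and then the $L^r$ norm of $U_\la'(x)=\pi\tan(\pi x/2)$ reduced to a Beta integral (the paper substitutes $s=\tan(\pi x/2)$ to get $\int_0^\infty s^r(1+s^2)^{-1}ds$, which is the same computation as your sine--cosine Beta identity). No gaps.
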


\begin{proof}
By the uniqueness, $U_{\la}$ must be an even function on $I$ and $\mu_{\la} = U_{\la}(0)$.
By a time map method, we have
\begin{equation}
\label{U_la_prime}
	U_{\la}'(x) = \sqrt{2\la \(e^{U_{\la}(x)} - e^{\mu_{\la}}\)}, \quad (0 \le x < 1).
\end{equation}
By \eqref{U_la_prime} and the change of variables $t = U_{\la}(x)$ and $t= \mu_{\la} s$, we compute
\begin{align*}
	1 &= \int_0^1 1 dx =\int_0^1 \frac{U_{\la}'(x)}{\sqrt{2\la \(e^{U_{\la}(x)} - e^{\mu_{\la}}\)}} dx \\
	&=\frac{1}{\sqrt{2\la e^{\mu_{\la}}}} \int_{1}^{\infty} \frac{ds}{s\sqrt{s-1}} = \frac{1}{\sqrt{2\la e^{\mu_{\la}}}} \int_{0}^{\infty} \frac{2}{t^2 + 1} dt,
\end{align*}
which implies
\[
	\sqrt{2\la e^{\mu_{\la}}} = 2 \arctan (t) \Big |_{t=0}^{t=\infty} = \pi.
\]
Thus we obtain \eqref{mu_la}.
Also, for $y \in [0, 1)$, we have
\begin{align*}
	y &= \int_0^y 1 dx =\int_{0}^{y} \frac{U_{\la}'(x)}{\sqrt{2\la \(e^{U_{\la}(x)} - e^{\mu_{\la}}\)}} dx \\
	&=\frac{1}{\sqrt{2\la e^{\mu_{\la}}}} \int_{1}^{e^{U_{\la}(y) - \mu_{\la}}} \frac{ds}{s\sqrt{s-1}} \\
	&=\frac{1}{\sqrt{2\la e^{\mu_{\la}}}} \int_{0}^{\sqrt{\exp(U_{\la}(y) - \mu_{\la}) -1}} \frac{2}{t^2 + 1} dt \\
	&=\frac{2}{\sqrt{2\la e^{\mu_{\la}}}} \arctan \(\sqrt{\exp(U_{\la}(y) - \mu_{\la}) -1}\).
\end{align*}
By inserting $\mu_{\la} = \log \(\frac{\pi^2}{2 \la}\)$ to this expression, we have
\begin{align*}
	y = \frac{2}{\pi} \arctan \( \sqrt{\frac{2\la}{\pi^2} e^{U_{\la}(y)} - 1} \).
\end{align*}
Form this, we obtain \eqref{U_la-form}.
Finally, since
\[
	U_{\la}'(x) = - 2 \( \log \( \cos \( \frac{\pi x}{2} \) \) \)' = \pi \tan \( \frac{\pi}{2} x \),
\]
we have
\begin{align*}
	\| U_{\la}' \|_r^r &= 2 \int_0^1 |U_{\la}'(x)|^r dx = 2 \pi^r \int_0^1 \tan^r \( \frac{\pi}{2} x \) dx \\
	&= 2 \pi^r \int_0^{\infty} s^r \cdot \(\frac{2}{\pi}\) \(\frac{ds}{1+s^2}\) \\
	&= 2 \pi^{r-1} \mathrm{B}\(\frac{1-r}{2},\frac{r+1}{2}\) < \infty
\end{align*}
if and only if $0 < r < 1$. This proves \eqref{U_la-prime-norm}.
\end{proof}

Using Lemma \ref{Lemma:U_la}, we obtain the following.

\begin{theorem}
\label{Thm:E}
Assume \eqref{Assumption_E}.
Then for any $\la > 0$ there is a unique solution $u_{\la}$ to \eqref{BBP_E}.
Moreover, $u_{\la}$ must be of the form
\begin{equation}
\label{u_la}
	u_{\la}(x) = U_{\la}(x) - \log \( \la \frac{B(\| U_{\la}' \|_{r_2}}{A(\| U_{\la}' \|_{r_1}} \) \quad (x \in I),
\end{equation}
where $U_{\la}$ is the unique solution to \eqref{Eq:U_la} given by \eqref{U_la-form} with $\| U_{\la}' \|_{r}$ in \eqref{U_la-prime-norm}. 
\end{theorem}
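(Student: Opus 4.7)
The plan is to carry out the exact analogue of the reduction used in the proof of Theorem~\ref{Theorem:P}, but replacing the multiplicative rescaling $u \mapsto \gamma u$ by an additive shift $u \mapsto u + c$, which is the natural normalization when the nonlinearity is $e^u$ rather than $u^p$. The crucial point is that an additive shift does not alter $u'$, so the nonlocal coefficients $A(\|u'\|_{r_1})$ and $B(\|u'\|_{r_2})$ are invariant under the substitution; the equation determining $c$ therefore becomes algebraically explicit rather than implicit.

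Concretely, I would first take any solution $u$ of \eqref{BBP_E} and set $v = u + c$. Because $v' = u'$, a direct computation gives
\[
v''(x) = \lambda \frac{B(\|u'\|_{r_2})}{A(\|u'\|_{r_1})} e^{-c} e^{v(x)}.
\]
Choosing $c = \log\bigl(B(\|u'\|_{r_2})/A(\|u'\|_{r_1})\bigr)$ kills the coefficient and makes $v$ a boundary blow up solution of \eqref{Eq:U_la}. The uniqueness statement for \eqref{Eq:U_la} recalled from \cite{CDG} then forces $v \equiv U_\lambda$, whence $\|u'\|_{r_i} = \|U_\lambda'\|_{r_i}$ for $i=1,2$, and $c$ becomes a completely explicit constant depending only on $\lambda$ and the data $A, B$. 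This proves uniqueness and yields the representation formula \eqref{u_la}.

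For existence, I would simply check that the function $u_\lambda$ defined by the right-hand side of \eqref{u_la} is indeed a solution. Assumption \eqref{Assumption_E} together with Lemma~\ref{Lemma:U_la} guarantees $\|U_\lambda'\|_{r_i} < \infty$ for $i=1,2$, so the nonlocal coefficients are well defined; the boundary blow up condition is inherited from $U_\lambda$; and a direct substitution, using $u_\lambda'' = U_\lambda'' = \lambda e^{U_\lambda}$ together with the translation rule $e^{U_\lambda - c} = e^{-c} e^{U_\lambda}$, verifies the equation pointwise on $I$.

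I do not anticipate a substantial obstacle. The translation equivariance of the exponential, combined with the insensitivity of $u'$ to additive constants, makes the whole reduction essentially algebraic, and no analogue of the $s_1, s_2, t_1, t_2$ system of Theorem~\ref{Theorem:P} is needed: a single scalar shift $c$ already absorbs both nonlocal factors $A$ and $B$ simultaneously. The only point requiring care is the finiteness of $\|U_\lambda'\|_{r_i}$, which is precisely the content of assumption \eqref{Assumption_E} and Lemma~\ref{Lemma:U_la}.
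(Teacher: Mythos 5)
You take essentially the same route as the paper: an additive shift $v=u+c$, which leaves $u'$ (hence the nonlocal coefficients) unchanged, reduction to the reference problem \eqref{Eq:U_la}, the uniqueness result of \cite{CDG} for the uniqueness direction, and a direct substitution (with finiteness of $\|U_\la'\|_{r_i}$ guaranteed by \eqref{Assumption_E} and Lemma \ref{Lemma:U_la}) for existence.

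One discrepancy should be flagged, and it is in your favor. With your choice $c=\log\bigl(B(\|u'\|_{r_2})/A(\|u'\|_{r_1})\bigr)$ one indeed gets $v''=\la e^{v}$, so $v\equiv U_\la$, and the representation you actually derive is $u_\la=U_\la-\log\bigl(B(\|U_\la'\|_{r_2})/A(\|U_\la'\|_{r_1})\bigr)$, with no $\la$ inside the logarithm. This is not literally \eqref{u_la}: substituting the printed formula gives $A(\|U_\la'\|_{r_1})\,u_\la''=\la\,A(\|U_\la'\|_{r_1})\,e^{U_\la}$, whereas $\la\,B(\|U_\la'\|_{r_2})\,e^{u_\la}=A(\|U_\la'\|_{r_1})\,e^{U_\la}$, so the printed formula misses the equation by a factor $\la$; it becomes correct only after replacing $U_\la$ by $U_1=U_\la+\log\la$ (the paper's own verification tacitly uses $U_\la''=e^{U_\la}$ instead of $U_\la''=\la e^{U_\la}$, and its shift $v_\la=u_\la+\log(\la B/A)$ in fact solves $v_\la''=e^{v_\la}$, not $\la e^{v_\la}$). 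So do not assert that your computation ``yields \eqref{u_la}'' verbatim; either record your own (correct) formula, or add the observation $U_\la=U_1-\log\la$, which reconciles the two writings and shows the extra $\la$ in \eqref{u_la} is a slip in the statement rather than a flaw in your argument. Everything else --- invariance of the norms under the shift, the existence check, the role of \eqref{Assumption_E} --- matches the paper's proof.
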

	
\begin{proof}
For any $\la > 0$, define $u_{\la}$ as in \eqref{u_la}.
Then
\[
	u_{\la}'(x) = U_{\la}'(x), \quad \|u_{\la}'\|_{r_i} = \|U_{\la}'\|_{r_i} \quad (i=1,2).
\]
Thus
\begin{align*}
	A\(\| u' \|_{r_1} \) u_{\la}''(x) &= A\(\| U_{\la}' \|_{r_1} \) U_{\la}''(x) \\
	&\overset{\eqref{Eq:U_la}}{=} A\(\| U_{\la}' \|_{r_1} \) e^{U_{\la}(x)} \\
	&\overset{\eqref{u_la}}{=} A\(\| U_{\la}' \|_{r_1} \) e^{u_{\la}(x) + \log \( \la \frac{B(\| U_{\la}' \|_{r_2}}{A(\| U_{\la}' \|_{r_1}} \)} \\
	&= A\(\| U_{\la}' \|_{r_1} \) \la \frac{B(\| U_{\la}'\|_{r_2})}{A(\| U_{\la}' \|_{r_1})} e^{u_{\la}(x)} \\
	&= \la B\(\| u_{\la}' \|_{r_2} \) e^{u_{\la}(x)}
\end{align*}
and $u_{\la}(\pm 1) = +\infty$. Thus $u_{\la}$ solves \eqref{BBP_E}.

On the other hand, for any solution $u_{\la}$ to \eqref{BBP_E}, define
\begin{equation}
\label{v_la}
	v_{\la}(x) = u_{\la}(x) + \log \( \la \frac{B(\| u_{\la}' \|_{r_2}}{A(\| u_{\la}' \|_{r_1}} \) \quad (x \in I).
\end{equation}
Then 
\begin{align*}
	v_{\la}''(x) &\overset{\eqref{v_la}}{=} u_{\la}''(x) \overset{\eqref{BBP_E}}{=} \la \frac{B\(\| u_{\la}' \|_{r_2}\)}{A\(\| u_{\la}' \|_{r_1}\)} e^{u_{\la}(x)} \\
	&= \la e^{u_{\la}(x) + \log \( \frac{B\(\| u_{\la}' \|_{r_2}\)}{A\(\| u_{\la}' \|_{r_1}\)} \)} \\
	&\overset{\eqref{v_la}}{=} \la e^{v_{\la}(x)}
\end{align*}
and $v(\pm 1) = +\infty$.
Thus $v_{\la}$ solves \eqref{Eq:U_la} and  $v_{\la} \equiv U_{\la}$ by the uniqueness of $U_{\la}$. 
Also $U_{\la}'(x) = v_{\la}'(x) = u_{\la}'(x)$ for any $x \in I$ by \eqref{v_la}. 
Returning to \eqref{v_la}, we see that $u_{\la}$ has the form \eqref{u_la}.
\end{proof}

\vskip 0.5cm

\noindent\textbf{Acknowledgement.} 
The second author (F.T.) was supported by JSPS Grant-in-Aid for Scientific Research (B), No. 23H01084, 
and was partly supported by Osaka Central University Advanced Mathematical Institute (MEXT Joint Usage/Research Center on Mathematics and Theoretical Physics).

\end{document}